\newtheorem{thm}{Theorem}
\theoremstyle{definition}
\newcommand{\vertiii}[1]{{\left\vert\kern-0.25ex\left\vert\kern-0.25ex\left\vert
		#1 \right\vert\kern-0.25ex\right\vert\kern-0.25ex\right\vert}}
\def \lim   {\text {\rm lim}}
\begin{document}
	
	\title[]{A proof of Sylvester's theorem}
	\author{Saptak Bhattacharya}
	\address{Indian Statistical Institute\\
		New Delhi 110016\\
		India}
	\email{saptak21r@isid.ac.in}
	\begin{abstract}We give a new elementary proof of existence and uniqueness of a solution to the Sylvester equation $AX-XB=Y$\end{abstract}
	\subjclass[2010]{15A23, 15A30}
	\keywords{Sylvester's equation, block matrices}
	\maketitle
	\section{Introduction}
	Given finite dimensional Hilbert spaces $\mathcal{H}$, $\mathcal{K}$ and operators $A\in\mathcal{L}(\mathcal{K})$, $B\in\mathcal{L}(\mathcal{H})$ and $Y\in\mathcal{L}(\mathcal{H},{K})$ the Sylvester equation asks for solutions $X\in\mathcal{L}(\mathcal{H},{K})$ to \[AX-XB=Y\label{e1}\tag{1}\]
	A particular case of interest is the Lyapunov equation \[A^*X+XA=Y\label{e2}\tag{2}\] which arises in stability theory (see \cite{par}). Equation \eqref{e1} was first studied by Sylvester in \cite{syl}, who showed that it has a unique solution if $\sigma(A)\cap\sigma(B)=\emptyset$. This was generalized to infinite dimensions by Rosenblum in \cite{ros}.
	\medskip
	
	The purpose of this note is to give a short proof of Sylvester's theorem using elementary block matrix arguments. Other different proofs are given in \cite{bhr, nh, dym}.  A thorough survey on equation \eqref{e1} can be found in \cite{bhr}.
	\section{Main result}
	\begin{thm}\label{t1}Let $\mathcal{H}$ and $\mathcal{K}$ be finite dimensional Hilbert spaces and let $A\in \mathcal{L}(\mathcal{K})$ and $B\in\mathcal{L}(\mathcal{H})$ with $\sigma(A)\cap\sigma(B)=\emptyset$. Then for every $Y\in \mathcal{L}(\mathcal{H},\mathcal{K})$ there exists a unique $X\in\mathcal{L}(\mathcal{H},\mathcal{K})$ such that $AX-XB=Y$.\end{thm}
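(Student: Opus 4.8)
The plan is to recast the problem as a statement about a single linear operator. Define $\mathcal{S}\colon \mathcal{L}(\mathcal{H},\mathcal{K})\to\mathcal{L}(\mathcal{H},\mathcal{K})$ by $\mathcal{S}(X)=AX-XB$. Since $\mathcal{L}(\mathcal{H},\mathcal{K})$ is finite dimensional and $\mathcal{S}$ is linear, asserting existence and uniqueness of $X$ for every $Y$ is exactly asserting that $\mathcal{S}$ is a bijection; and because $\mathcal{S}$ is an endomorphism of a finite dimensional space, by rank--nullity it suffices to prove that $\mathcal{S}$ is \emph{injective}, i.e. that its kernel is trivial. The block-matrix picture that motivates this, and that I would record first, is that a given $X$ solves $AX-XB=Y$ precisely when
\[
\begin{pmatrix} I & X \\ 0 & I\end{pmatrix}\begin{pmatrix} A & 0 \\ 0 & B\end{pmatrix}\begin{pmatrix} I & -X \\ 0 & I\end{pmatrix}=\begin{pmatrix} A & -Y \\ 0 & B\end{pmatrix},
\]
so that solvability is the statement that the two block matrices with diagonal $(A,B)$ are conjugate by a unipotent upper-triangular block.

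For uniqueness I would argue as follows. Suppose $AX=XB$; an immediate induction gives $A^{k}X=XB^{k}$ for all $k\ge 0$, hence $q(A)X=X\,q(B)$ for every polynomial $q$. Taking $q=\chi_{B}$, the characteristic polynomial of $B$, the Cayley--Hamilton theorem gives $q(B)=0$, so $q(A)X=0$. The roots of $q$ are exactly the eigenvalues of $B$, none of which lies in $\sigma(A)$ by hypothesis; therefore the eigenvalues of $q(A)$, namely the numbers $q(\lambda)$ with $\lambda\in\sigma(A)$, are all nonzero, and $q(A)$ is invertible. Multiplying by $q(A)^{-1}$ forces $X=0$, so $\ker\mathcal{S}=\{0\}$.

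Existence is then automatic, since an injective linear map on a finite dimensional space is onto; thus every $Y$ admits a (necessarily unique) preimage $X$. If instead one wants to stay inside the block-matrix framework and produce $X$ constructively, I would identify the solution with the unique map whose graph $\{(-Xh,h):h\in\mathcal{H}\}$ is the generalized eigenspace of $M=\bigl(\begin{smallmatrix}A&Y\\0&B\end{smallmatrix}\bigr)$ attached to $\sigma(B)$: the spectral disjointness makes this invariant subspace complementary to $\mathcal{K}\oplus 0$ and forces it to project isomorphically onto $\mathcal{H}$, which is exactly what makes it a graph and recovers the conjugation displayed above.

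The one place where the hypothesis $\sigma(A)\cap\sigma(B)=\emptyset$ is genuinely used — and the step I expect to be the main obstacle — is the invertibility of $q(A)$ in the uniqueness argument (equivalently, the complementarity of the two invariant subspaces in the constructive version); everything else is formal finite-dimensional linear algebra. I would therefore concentrate care on the spectral mapping for polynomials, verifying that $q(A)$ has no zero eigenvalue, and rely on the reduction ``injective $\Rightarrow$ bijective'' as the mechanism that upgrades uniqueness into existence.
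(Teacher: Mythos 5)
Your proof is correct, but it takes a genuinely different route from the paper's. The shared skeleton is the opening reduction: like the author, you note that $X\mapsto AX-XB$ is a linear endomorphism of the finite dimensional space $\mathcal{L}(\mathcal{H},\mathcal{K})$, so by rank--nullity everything hinges on injectivity. From there you diverge. You run the classical Cayley--Hamilton argument: $AX=XB$ propagates to $q(A)X=Xq(B)$ for every polynomial $q$, and with $q=\chi_B$ you get $\chi_B(A)X=0$, where $\chi_B(A)$ is invertible because (over $\mathbb{C}$, which both proofs tacitly need) the spectral mapping theorem identifies its eigenvalues as the values $\chi_B(\lambda)$, $\lambda\in\sigma(A)$, none of which vanish by the disjointness hypothesis; equivalently one can factor $\chi_B(A)=\prod_i(A-\mu_i I)$ over the eigenvalues $\mu_i$ of $B$ and invert each factor. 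The paper instead stays entirely inside block-matrix algebra: it first observes that an \emph{invertible} element of the kernel would give $X^{-1}AX=B$ and hence $\sigma(A)=\sigma(B)$, then reduces the general case to this one by decomposing $\mathcal{H}=(\ker X)^{\perp}\oplus\ker X$ and $\mathcal{K}=\mathrm{im}\,X\oplus\ker X^{*}$, so that $X$ has a single invertible block $Y$, and extracting from $AX=XB$ the relation $EY=YP$ for diagonal blocks $E$, $P$ of the resulting block-triangular forms of $A$ and $B$; since $Y$ is invertible, $E$ and $P$ are similar, while $\sigma(E)\subset\sigma(A)$ and $\sigma(P)\subset\sigma(B)$, a contradiction. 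Thus the paper's only spectral inputs are invariance of spectrum under similarity and the spectral inclusion for triangular blocks --- it never invokes Cayley--Hamilton or the spectral mapping theorem, which is exactly the elementary character it advertises; your route, by contrast, avoids any decomposition and produces the explicit invertible operator $\chi_B(A)$ annihilating the kernel element, which if pushed slightly further (expanding $\chi_B(A)X-X\chi_B(B)$ telescopically in powers of $A$ and $B$) even yields a closed-form expression for the solution $X$ in terms of $\chi_B(A)^{-1}$, $A$, $B$, $Y$. Your closing remarks are also sound: the unipotent conjugation identity is correct as displayed, and the identification of the solution's graph with the spectral subspace of $\bigl(\begin{smallmatrix}A&Y\\0&B\end{smallmatrix}\bigr)$ attached to $\sigma(B)$ (complementary to $\mathcal{K}\oplus 0$ by disjointness of spectra, hence a graph by dimension count) is a legitimate constructive alternative, though as you present it, it repackages rather than replaces the injectivity argument.
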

	\begin{proof}Consider the map $\Phi:\mathcal{L}(\mathcal{H},\mathcal{K})\to\mathcal{L}(\mathcal{H},\mathcal{K})$ given by $\Phi(X)=AX-XB$. It suffices to show that $\Phi$ is injective. If ker $\Phi$ contains an invertible $X$, we have $$X^{-1}AX=B$$ implying $\sigma(A)=\sigma(B)$, a contradiction. If not, we use a block matrix argument to reduce to this case. Let $X\in$ ker $\Phi$ such that $X\neq O$. Consider the direct sum decompositions $$\mathcal{H}=(\textrm{ker}\hspace{0.7mm}X)^{\perp}\oplus\textrm{ker}\hspace{0.7mm}X$$ and $$\mathcal{K}=\textrm{im}\hspace{0.7mm}X\oplus\textrm{ker}\hspace{0.7mm}X^{*}.$$ Note that $(\textrm{ker}\hspace{0.7mm}X)^{\perp}\neq\{0\}$.
	With respect to these decompositions, we have the block matrices $$X=\begin{pmatrix}Y & O\\O & O\end{pmatrix}$$ $$A=\begin{pmatrix}E & F\\G & H\end{pmatrix}$$and$$B=\begin{pmatrix}P & Q\\R & S\end{pmatrix}.$$ Observe that $Y$ is invertible. The condition $AX=XB$ now yields $$\begin{pmatrix}EY & O\\GY & O\end{pmatrix}=\begin{pmatrix}YP & YQ\\O & O\end{pmatrix}$$ implying \[EY=YP,\label{e3}\tag{3}\]$$G=O\hspace{2mm}\textrm{and}\hspace{2mm}Q=O.$$ Now $$A=\begin{pmatrix}E & F\\O & H\end{pmatrix}$$ and$$B=\begin{pmatrix}P & O\\R & S\end{pmatrix}.$$ Thus, $\sigma(E)\subset\sigma(A)$ and $\sigma(P)=\sigma(P^t)\subset\sigma(B^t)=\sigma(B)$ which implies $\sigma(E)\cap\sigma(P)=\emptyset$. From \eqref{e3}, we have a contradiction due to the invertibility of $Y$.
	\end{proof}
    
	\bibliographystyle{amsplain}
	
\end{document}